\newcommand{\margnote}[1]{
\ifthenelse{\boolean{shownotes}}%
{\marginpar{\raggedright\tiny\texttt{#1}}}%
{}%
}
\newcommand{\hole}[1]{
\ifthenelse{\boolean{shownotes}}%
{\begin{center} \fbox{ \rule {.25cm}{0cm}
\rule[-.1cm]{0cm}{.4cm} \parbox{.85\textwidth}{\begin{center}
\texttt{#1}\end{center}} \rule {.25cm}{0cm}}\end{center}}
{}
}
\newtheorem{theorem}{Theorem}[section]
\newtheorem{lemma}[theorem]{Lemma}
\theoremstyle{remark}
\newtheorem{definition}[theorem]{Definition}
\newcommand{\e}{\varepsilon}		       
\newcommand{\R}{\mathbb{R}}
\newcommand{\ue}{u^{\varepsilon}}
\newcommand{\pe}{p^{\varepsilon}}
\newcommand{\dive}{\mathop{\mathrm {div}}}
\newcommand{\PH}{\dot{H}}
 \def\rest#1#2{{#1}\lfloor\lower3pt\hbox{\kern-2pt\_\kern-1pt\_}{#2}}
\numberwithin{equation}{section}
\begin{document}
\title[Artificial compressibility and suitable weak solutions]
{Weak solutions of Navier-Stokes equations\\
 constructed by artificial compressibility method\\
  are suitable }

\author{DONATELLA DONATELLI}

\address{Dipartimento di Matematica Pura ed Applicata,\\
Universit\`a degli Studi dell' Aquila\\
67100 L'Aquila,Italy.}
\email{donatell@univaq.it }

\author{STEFANO SPIRITO}

\address{Dipartimento di Matematica Pura ed Applicata,\\
Universit\`a degli Studi dell' Aquila\\
67100 L'Aquila,Italy.}
\email{stefano.spirito@dm.univaq.it }

\begin{abstract}
In this paper we prove that weak solution constructed by artificial compressibility method are suitable in the sense of Scheffer, \cite{sv}, \cite{sv1}.
Using Hilbertian setting and Fourier transform with respect to the time we obtain nontrivial estimates of the pressure and the time derivate which allow us to pass into the limit.
\end{abstract}
\subjclass{35Q10,(76N10,76D05)}

\maketitle
\section{Introduction}\label{intro}
In this paper we investigate if the weak solutions of the Navier-Stokes equations are suitable in the sense of Scheffer \cite{sv}, \cite{sv1}. 
The incompressible Navier-Stokes equations in three spatial dimensions with unit 
viscosity and zero external force are given by  the following system
\begin{equation}
 \begin{cases}
 \displaystyle{\partial_{t}u-\Delta u+(u\cdot\nabla)u+\nabla p=0}\\
 \dive u=0,
 \end{cases}
 \label{1.1}
 \end{equation}
 where $(x,t)\in\R^{3}\times [0,T]$, $u\in\R^{3}$ denotes the velocity vector field and $p\in \R$ the pressure of the fluid. Let us recall the notion of Leray weak solution. 
 \begin{definition}
 We say that $u\in L^{\infty}((0,T);L^{2}(\R^3))\cap L^2((0,T);\dot{H}^{1}(\R^3)) $ is a Leray weak solution of the Navier-Stokes equations if it satisfies \eqref{1.1} in the sense of distribution for all
$\psi\in C^{\infty}_{0}(\R^3 \times\R)$ with $\dive \psi=0$ and moreover the following energy inequality holds for every $t\in[0,T]$
\begin{equation}
\int_{\R^{3}}dx |u(x,t)|^{2}+2\int_{0}^{t}ds\int_{\R^{3}}dx |\nabla u(x,s)|^{2} \leq \int_{\R^{3}}dx |u(x,0)|^{2}.
\label{Ler}
\end{equation}
 \end{definition}
In the mathematical literature there exists several proofs of the global existence of Leray weak solution for divergence-free initial data in $L^{2}(\R^{3})$, see for example the book of P. L. Lions \cite{LPL96} or the monograph 
of T{\'e}mam \cite{Tem01}. Several problems about the Leray weak solutions are still open, for example it is not know whether or not the solutions are unique or develop singularities in a finite time also for smooth initial data.  
In fact the regularity requirements for proving uniqueness and global in time regularity are not available at the present time for Leray weak solutions.
Scheffer in \cite{sv}, \cite{sv1} introduced the notion of suitable weak solutions that we recall here.
\begin{definition}
 Let $(u,p)$, $u\in L^{2}((0,T);H^{1}(\R^{3}))\cap L^{\infty}((0,T);L^{2}(\R^{3}))$, $p\in {\mathcal{D}}^{'}((0,T);L^{2}(\R^3))$, be a weak solution to the Navier-Stokes equation \eqref{1.1}.
 The pair $(u,p)$ is said a suitable weak solutions if the following local energy balance
\begin{equation}
\partial_{t}\left(\frac{1}{2}|u|^{2}\right)+\nabla\cdot\left(\left(\frac{1}{2}|u|^{2}+p\right)u\right)-\Delta\left(\frac{1}{2}|u|^{2}\right)+ |\nabla u|^{2}-f\cdot u \leq 0
\label{1.2}
\end{equation}
holds in the distributional sense. 
 \end{definition}
In mathematical literature \eqref{1.2} is called  \emph{generalized energy inequality}.\\
It is important to remark that weak solutions constructed by Leray are suitable. At the moment the best regularity result for the weak solution of the Navier-Stokes equations is a partial regularity result, i.e. the so called Caffarelli-Kohn-Nirenberg theorem,  \cite{CKN}. This theorem asserts  that the one-dimensional parabolic Hausdorff measure of the singular set is zero and holds only for suitable weak solutions. If the class of suitable weak solutions is a proper class of Leray weak solution is an open problem since Scheffer's works. The method usually used to construct suitable weak solutions are regularization of the non-linear term , \cite{CKN}, adding hyper viscosity, \cite{BDV1}. Recently Guermond in  \cite{Guer}  proved
  that some type of Faedo-Galerkin approximation lead to a suitable weak solution. He used classical Lions \cite{L-JL59} method of fractional derivate in order to obtain maximal regularity in negative Sobolev space. In particular Guermond proved that Faedo-Galerkin weak solutions of the three dimensional Navier-Stokes equations with Dirichlet boundary condition are suitable provided they are constructed using finite-dimensional approximation spaces having a discrete commutator property. 
 
  In this paper we prove that Leray weak solutions costructed by the artificial compressibility method are suitable in the sense of the Definition 1.2.
  The artificial compressibility approximation was introduced by Chorin \cite{Ch68, Ch69}, Oskolkov \cite{Osk} and T{\'e}mam \cite{Tem69a, Tem69b}, in order to deal with the difficulty induced by the incompressibility constraints in the numerical approximation. The approximation system reads as follows
  \begin{equation}
\begin{cases}
\displaystyle{\partial_{t}\ue+\nabla \pe=\Delta \ue-\left(\ue\cdot\nabla\right)\ue -\frac{1}{2}(\dive \ue)\ue}\\
\e\partial_{t}\pe+ \dive \ue=0,
\end{cases}
\label{1.3}
\end{equation} 
 where $(x,t)\in \R^{3}\times [0,T]$, $\ue=\ue(x,t)\in\R^{3}$  and $\pe=\pe(x,t)\in \R$, $f^{\e}=f^{\e}(x,t)\in \R^{3}$.\\
Temam showed the convergence of this approximation on bounded domain. Recently in \cite{DM01} and \cite{DM02} the result was extended in the case of the whole space and exterior domain, respectively. In \cite{DM01} the convergence towards Leray weak solutions of the Navier-Stokes equations is achieved by using the dispersive structure of the system.
Here we will consider the system \eqref{1.3} endowed with the following initial conditions
 \begin{equation}
\ue(x,0)=\ue_{0}(x) \qquad\pe(x,0)=\pe_{0}(x), 
\label{1.4}
\end{equation} 
 such that
\begin{align} 
\ue_{0}&\rightarrow u_{0}\textrm{ in }L^{2}(\R^{3})\qquad\textrm{as $\e\rightarrow 0$}\label{1.5},\\
\sqrt{\varepsilon}\pe_{0}&\rightarrow 0\textrm{ in }L^{2}(\R)\qquad\textrm{as $\e\rightarrow 0$}.\label{1.6}
\end{align}
As we will see later on in Section 3, in order to get some a priori estimate on $\ue$ and $\pe$ we need to assume that
\begin{equation}
\ue_{0}\in{\PH}^{1}({\R}^{3}).
\label{plus3}
\end{equation}
We will be able to prove the following theorem
\begin{theorem}\label{main} 
Let $(\ue,\pe)$ be a weak solution of the system \eqref{1.3} with initial data \eqref{1.4} and such that \eqref{1.5}, \eqref{1.6} and \eqref{plus3} are satisfied.
Then $(\ue,\pe)$ converges to a suitable weak solution of the Navier-Stokes system as $\e$ goes to zero. 
\end{theorem}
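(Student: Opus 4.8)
The plan is to first obtain $\e$-uniform a priori bounds, then write a local energy \emph{equality} for the approximate system \eqref{1.3}, and finally pass to the limit $\e\to0$, checking that the compressibility corrections disappear and that the dissipation term degrades the equality into the inequality \eqref{1.2}. I would start from the natural energy balance: testing the first equation of \eqref{1.3} with $\ue$ and using the second equation to write $\dive\ue=-\e\partial_t\pe$, the convective term $(\ue\cdot\nabla)\ue$ and the correction $\tfrac12(\dive\ue)\ue$ combine into $\tfrac12\dive(|\ue|^2\ue)$ and integrate to zero, while $\nabla\pe\cdot\ue$ yields $\tfrac{\e}{2}\partial_t|\pe|^2$. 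This produces, via \eqref{1.5}--\eqref{1.6}, the uniform bounds $\ue\in L^\infty((0,T);L^2(\R^3))\cap L^2((0,T);\dot{H}^1(\R^3))$ and $\sqrt{\e}\,\pe\in L^\infty((0,T);L^2(\R^3))$.

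The core of the argument, and the step I expect to be the main obstacle, is to upgrade this to an $\e$-uniform bound on $\pe$ itself (without the $\sqrt{\e}$ weight) together with a fractional-in-time estimate on $\ue$; this is precisely where \eqref{plus3} and the Hilbertian, Fourier-in-time machinery announced in the abstract enter. Taking the divergence of the momentum equation and substituting $\dive\ue=-\e\partial_t\pe$ turns \eqref{1.3} into an acoustic, wave-type equation for $\pe$ that degenerates as $\e\to0$ and is driven by $\dive\dive(\ue\otimes\ue)$. Passing to the Fourier variable dual to $t$ and working in the $L^2$-based scale, I would solve for the transform of $\pe$ in terms of the source and extract a bound uniform in $\e$; the same representation should control a fractional time derivative of $\ue$ in a negative Sobolev norm. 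The delicate point is that the symbol of the $\pe$-equation degenerates with $\e$, so the estimates must be shown to be genuinely $\e$-independent.

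Granting these bounds, the spatial control $L^2\dot{H}^1$ together with the fractional time regularity yields, by an Aubin--Lions argument localized in space, strong convergence $\ue\to u$ in $L^2_{loc}(\R^3\times(0,T))$, hence in $L^3_{loc}$ after interpolation with the energy bound; moreover $\dive\ue=-\e\partial_t\pe\to0$ shows $\dive u=0$. Since \eqref{plus3} gives enough regularity to legitimately test the momentum equation against $\ue\phi$ for $0\le\phi\in C_0^\infty$, one obtains the local energy equality
\[
\iint\Big[\Big(\tfrac12|\ue|^2+\tfrac{\e}{2}|\pe|^2\Big)\partial_t\phi+\tfrac12|\ue|^2\Delta\phi+\Big(\tfrac12|\ue|^2+\pe\Big)\ue\cdot\nabla\phi-|\nabla\ue|^2\phi\Big]\,dx\,dt=0.
\]
Letting $\e\to0$: the cubic term and the pressure term $\pe\,\ue\cdot\nabla\phi$ converge using strong convergence of $\ue$ and weak convergence of $\pe$; the compressibility term $\tfrac{\e}{2}|\pe|^2\partial_t\phi$ vanishes precisely because of the unweighted uniform bound on $\pe$; and the dissipation term survives only in the $\liminf$, so that weak lower semicontinuity of $u\mapsto\iint|\nabla u|^2\phi$ converts the equality into the correct inequality, giving exactly \eqref{1.2} with $f=0$. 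This completes the proof.
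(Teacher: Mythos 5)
Your overall architecture matches the paper's: the energy identity tested against $\ue\phi$, Fourier-in-time (Lions fractional derivative) estimates to obtain an unweighted bound on $\pe$ and fractional time regularity of $\ue$, Aubin--Lions type compactness, and weak lower semicontinuity of the dissipation term. The local energy identity you write is exactly the paper's (4.2) after using $\dive\ue=-\e\partial_t\pe$, and your global energy computation is correct.

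The genuine gap is your treatment of the compressibility term $\tfrac{\e}{2}\iint|\pe|^2\partial_t\phi$, equivalently $\int_0^T(\pe\dive\ue,\phi)\,dt$. You claim it vanishes ``precisely because of the unweighted uniform bound on $\pe$''. But the only unweighted bound on $\pe$ that the Fourier-in-time machinery produces is in a \emph{negative}-order time Sobolev space, $H^{-r}((0,T);\dot H^{1-s})$ (the paper's Lemma 3.4, estimate (3.16)); from such a bound $|\pe|^2$ is not a controllable quantity and no power of $\e$ is gained. The bound that does make sense of $|\pe|^2$, namely $\sqrt{\e}\pe\in L^\infty(L^2)$ from the energy estimate, only shows $\e\iint|\pe|^2|\partial_t\phi|\le C$ uniformly --- bounded, not vanishing. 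The paper's resolution, which it explicitly calls the core of the work, is the additional estimate $\|\sqrt{\e}\pe\|_{\dot H^{1/2+\beta/4}(H^{-1/2+\delta})}\le c$, a \emph{positive} fractional time-regularity bound obtained by testing the Fourier-transformed continuity equation with $\Delta^{-1/2+\delta}\tilde{\pe}$; one then estimates
\begin{equation*}
\Bigl|\int_0^T(\pe\dive\ue,\phi)\,dt\Bigr|\le \sqrt{\e}\,\|\sqrt{\e}\pe\|_{\dot H^{1/2+\beta/4}(H^{-1/2+\delta})}\,\|\pe\|_{H^{-1/2-\beta}(\dot H^{1/2-\delta})},
\end{equation*}
pairing the two factors in dual fractional spaces in time and space so that an explicit spare factor $\sqrt{\e}$ survives and forces the term to zero. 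Without this (or an equivalent) estimate your argument does not close. A secondary imprecision: for $\iint\pe\,\ue\cdot\nabla\phi$, weak convergence of $\pe$ only in $H^{-r}(H^{s})$ must be paired with \emph{strong} convergence of $\ue$ in the dual space $H^{r}(H^{-s}_{loc})$ (which follows from the $H^{\tau}(H^{-\alpha})$ bound on $\ue$ together with the compactness Lemma 2.3), not merely with strong convergence in $L^2_{loc}$ or $L^3_{loc}$ of space-time; you have the ingredients for this but the duality needs to be stated and exploited explicitly.
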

In order to prove this theorem we have to estimate carefully the pressure term $\pe$ and $\sqrt{\e}\pe$. Since we haven't the incompressibility constraint we cannot use classical method based on the elliptic equation associated to the pressure.  Also the dispersive approach, as in \cite{DM01}, doesn't give usefull estimates. We will use the method of Lions of the fractional derivates for getting the necessary estimates.
This paper is organized as follows. In Section 2 we recall some basic facts about Navier-Stokes equations and the artificial compressibility method and fix some notations. In Section 3 we obtain the \emph{a priori} estimates which allow us to pass into the limit. In Section 4 we give the proof of the main result. We want to point out that the estimates of the Lemma 3.4 are the core of this paper.  


\section{Notations and preliminary results}

 For convenience of the reader we establish some notations and recall some basic fact about the Navier-Stokes equations.


 \subsection{Notations}
We will denote by  $\mathcal{D}(\R ^d \times \R_+)$
 the space of test function
$C^{\infty}_{0}(\R^d \times \R_+)$, by $\mathcal{D}'(\R^d \times
\R_+)$ the space of Schwartz distributions and $\langle \cdot, \cdot \rangle$
the duality bracket between $\mathcal{D}'$ and $\mathcal{D}$ and by $\mathcal{M}_{t}X'$ the space $C_{c}^{0}([0,T];X)'$. The inner product in $L^{2}(\R^{d})$ will be denoted by parentheses, 
namely $(u,v):=\int_{\R^{d}}u(x)\overline{v(x)}dx$. Moreover $W^{k,p}(\R^{d})=(I-\Delta)^{-\frac{k}{2}}L^{p}(\R^{d})$ and $H^{k}(\R^{d})=W^{k,2}(\R^{d})$ denote the non homogeneous Sobolev spaces for any $1\leq p\leq \infty$ and $k\in \R$. $\dot W^{k,p}(\R^{d})=(-\Delta)^{\frac{k}{2}}L^{p}(\R^{d})$ and $\dot H^{k}(\R^{d})=W^{k,2}(\R^{d})$  denote the homogeneous Sobolev spaces. \\
Let $H$ be a Hilbert space,  $\delta>1$, $L^{\delta}(\mathbb{R};H)$ is the Lebesgue space of function with values in $H$, namely
 \begin{equation*}
L^{\delta}(\mathbb{R};H):= \left\{\psi : \mathbb{R} \ni t\mapsto
\psi(t) \in H; \int_\mathbb{R} ||\psi(t)||^{\delta}_{H} dt <
\infty \right\}.
\end{equation*}
For any $\psi \in L^{1}(\mathbb{R};H)$ and  $k \in \mathbb{R}$ we define the Fourier transform of $\psi$ with respect to time as follows
\begin{equation*}
\tilde{\psi}(k)= \int_{\mathbb{R}} dt \psi(t) e^{-2i\pi kt}.
\end{equation*}
The previous definition can be extended to the space of tempered distribution $S^{'} (\mathbb{R};H)$ in the usual way.
We define $H^{\gamma}(\mathbb{R};H)$ the space of tempered distributions $v \in S^{'}(\mathbb{R};H)$ such that
\begin{equation*}
\int_{\mathbb{R}} (1 + |k|)^{2\gamma} ||\tilde{v}||^{2}_{H} dk < +
\infty.
\end{equation*}
The space $H^{\gamma}((0,T);H)$ is defined by those distributions that can be extended to $S^{'}(\mathbb{R};H)$ and whose extension is in $H^{\gamma}((0,T);H)$. The norm in   $H^{\gamma}((0,T);H)$
is the quotient norm, namely, 
\begin{equation*}
||v||_{H^{\gamma}((0,T);H)} = \inf_{ \begin{array}{cc}
                                \scriptstyle \widetilde{v}=u \\
                                \scriptstyle \textrm{q.o. su } (0,T)
                                \end{array}} ||{v}||_{H^{\gamma}(\mathbb{R};H)}.
\end{equation*}
The notations  $L^{p}(L^{q})$, $L^{p}(W^{k,q})$, $L^{p}(H^{s})$ and $H^{r}(H^{s})$ will abbreviate respectively  the spaces $L^{p}([0,T];L^{q}(\R^{d}))$,  $L^{p}([0,T];W^{k,q}(\R^{d}))$, 
$L^{p}([0,T];H^{s}(\R^{d}))$ and $H^{r}([0,T];H^{s}(\R^{d}))$. Moreover, we shall denote by $Q$ and $P$ respectively  the Leray's projectors $Q$ on the space of gradients vector fields and $P$ on the space of divergence - free vector fields. Namely
\begin{equation}
Q=\nabla \Delta^{-1}\dive\qquad P=I-Q.
\label{2.1}
\end{equation} 
Let us remark that   $Q$ and $P$  can be expressed in terms of Riesz multipliers, therefore they are  bounded linear operators on every $W^{k,p}$ $(1< p<\infty)$ space (see \cite{Ste93}).   \\ \\


\subsection{Compactness lemma}

We recall some Aubin-Lions like compactness results. We will use these lemmas in the proof of Theorem 1.1.

\begin{lemma} \label{C1}
Let $H_{0}\subset H\subset H_{1}$ be three Hilbert space with dense and continuos embedding. Assume the the embedding $H_{0}\subset H$ is compact and
let $\gamma>0$ be a positive real number. Then, the injection $L^{2}((0,T);H_{0})\cap H^{\gamma}((0,T);H_{1})\rightarrow L^{2}((0,T);H)$
is compact.
\end{lemma}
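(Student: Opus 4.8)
The plan is to combine the classical Ehrling (Lions--Peetre) interpolation trick with a frequency decomposition based on the time Fourier transform, which is the natural tool here since $H^{\gamma}((0,T);H_{1})$ is defined through the Fourier transform in time. First I would record the Ehrling inequality: since $H_{0}\hookrightarrow H$ is compact and $H\hookrightarrow H_{1}$ is continuous, for every $\eta>0$ there is $C_{\eta}>0$ with $\|v\|_{H}\leq \eta\|v\|_{H_{0}}+C_{\eta}\|v\|_{H_{1}}$ for all $v\in H_{0}$. Squaring and integrating in time gives, for any bounded sequence $(v_{n})$ in $L^{2}((0,T);H_{0})\cap H^{\gamma}((0,T);H_{1})$,
\begin{equation*}
\|v_{n}-v_{m}\|_{L^{2}((0,T);H)}^{2}\leq 2\eta^{2}\,\|v_{n}-v_{m}\|_{L^{2}((0,T);H_{0})}^{2}+2C_{\eta}^{2}\,\|v_{n}-v_{m}\|_{L^{2}((0,T);H_{1})}^{2}.
\end{equation*}
Since the first term is uniformly bounded, the whole problem reduces to proving that $(v_{n})$ is precompact in $L^{2}((0,T);H_{1})$; note also that the compactness of $H_{0}\hookrightarrow H$ together with the continuity of $H\hookrightarrow H_{1}$ makes $H_{0}\hookrightarrow H_{1}$ compact, which is the only compactness I will use below.

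Next I would pass to the whole line. Using a bounded extension operator $E$ (and the very definition of the quotient norm of $H^{\gamma}((0,T);H_{1})$) followed by multiplication by a fixed cut-off $\theta\in C^{\infty}_{0}(\R)$ with $\theta\equiv 1$ on $[0,T]$, I would replace $v_{n}$ by $u_{n}=\theta\, E v_{n}$, which agrees with $v_{n}$ on $(0,T)$, is supported in a fixed compact interval $K\subset\R$, and is bounded uniformly in $n$ both in $L^{2}(\R;H_{0})$ and in $H^{\gamma}(\R;H_{1})$. By Plancherel in time it then suffices to prove that, along a subsequence, the Fourier transforms $\widetilde{u}_{n}(k)$ form a Cauchy sequence in $L^{2}(\R;H_{1})$, and for this I split the frequency variable according to $|k|>N$ and $|k|\leq N$.

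The tail is controlled uniformly by the fractional regularity:
\begin{equation*}
\int_{|k|>N}\|\widetilde{u}_{n}(k)\|_{H_{1}}^{2}\,dk\leq (1+N)^{-2\gamma}\int_{\R}(1+|k|)^{2\gamma}\|\widetilde{u}_{n}(k)\|_{H_{1}}^{2}\,dk\leq C\,(1+N)^{-2\gamma},
\end{equation*}
which tends to $0$ as $N\to\infty$ uniformly in $n$. For the low frequencies I would use that, because $u_{n}$ is supported in $K$, the map $\widetilde{u}_{n}(k)=\int_{K}u_{n}(t)e^{-2\pi i kt}\,dt$ is $H_{0}$-valued, uniformly bounded and equi-Lipschitz on $[-N,N]$: indeed $\|\widetilde{u}_{n}(k)\|_{H_{0}}\leq |K|^{1/2}\|u_{n}\|_{L^{2}(\R;H_{0})}$ and $\|\widetilde{u}_{n}(k)-\widetilde{u}_{n}(k')\|_{H_{0}}\leq 2\pi\big(\textstyle\int_{K}t^{2}dt\big)^{1/2}\|u_{n}\|_{L^{2}(\R;H_{0})}\,|k-k'|$, both uniformly in $n$. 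Since $H_{0}\hookrightarrow H_{1}$ is compact, the vector-valued Arzel\`a--Ascoli theorem yields, for each fixed $N$, a subsequence along which $\widetilde{u}_{n}$ converges uniformly on $[-N,N]$ in the $H_{1}$ norm, hence in $L^{2}((-N,N);H_{1})$. A diagonal extraction over $N\in\N$ produces a single subsequence $(u_{n_{j}})$ that converges in $L^{2}((-N,N);H_{1})$ for every $N$.

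Combining the two estimates finishes the argument: given $\varepsilon>0$ I first fix $N$ so that the tail is below $\varepsilon$ for all indices, and then use the low-frequency convergence to make the remaining contribution small for $j,l$ large; by Plancherel $(u_{n_{j}})$ is Cauchy in $L^{2}(\R;H_{1})$, so $(v_{n_{j}})$ converges in $L^{2}((0,T);H_{1})$, and the Ehrling estimate of the first step upgrades this to convergence in $L^{2}((0,T);H)$. I expect the genuinely delicate points to be the simultaneous extension to $\R$ preserving both the $L^{2}(H_{0})$ and the $H^{\gamma}(H_{1})$ bounds, and the low-frequency step, where the equicontinuity of $k\mapsto\widetilde{u}_{n}(k)$ coming from the $L^{2}(H_{0})$ bound must be matched with the compactness of $H_{0}\hookrightarrow H_{1}$ through the vector-valued Ascoli theorem; the high-frequency decay, by contrast, is a direct and uniform consequence of the definition of $H^{\gamma}$.
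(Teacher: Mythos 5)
The paper gives no proof of this lemma at all --- it simply cites Lions (\emph{Quelques m\'ethodes\dots}, p.~61, Th\'eor\`eme 5.2) --- so your proposal should be measured against that classical argument, and it passes: it is correct and is essentially the standard proof. The skeleton (Ehrling's inequality to reduce compactness into $L^{2}(H)$ to compactness into $L^{2}(H_{1})$, then Plancherel in time and a frequency splitting, with the tail $|k|>N$ killed uniformly by the $H^{\gamma}(H_{1})$ bound) is exactly Lions'. Your one genuine variation is at low frequencies: the classical proof fixes $k$, notes that $\widetilde{u}_{n}(k)\rightharpoonup 0$ weakly in $H_{0}$ (after subtracting the weak limit), upgrades this to strong convergence in $H_{1}$ by compactness of $H_{0}\hookrightarrow H_{1}$, and concludes by dominated convergence in $k$; you instead use the uniform bound and equi-Lipschitz property of $k\mapsto\widetilde{u}_{n}(k)$ in $H_{0}$ together with a vector-valued Arzel\`a--Ascoli theorem and a diagonal extraction. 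Both are valid; yours avoids identifying the weak limit but pays with the diagonal argument. The one point you rightly flag --- a single extension from $(0,T)$ to $\R$ bounded simultaneously in $L^{2}(H_{0})$ and $H^{\gamma}(H_{1})$ --- is real but standard: a Stein-type (or, for $\gamma<1/2$, zero) extension acts through a kernel in $t$ only, hence is bounded on both scales at once, and the subsequent cut-off multiplication preserves $H^{\gamma}$.
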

\begin{proof}
See Lions \cite{L-JL69} p.61 Theorem 5.2
\end{proof}

\begin{lemma}\label{C2}
Let $X\subset Y$ be two Hilbert space with compact embedding and let $\tau>\frac{1}{2}$. The injection $H^{\tau}((0,T);X)\rightarrow C^{0}([0,T];X)$ is continuos and the injection 
$H^{\tau}((0,T);X)\rightarrow C^{0}([0,T];Y)$
 is compact.
 \end{lemma}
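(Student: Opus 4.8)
The plan is to reduce everything to the whole line $\R$ via the quotient-norm definition of $H^{\tau}((0,T);X)$ and then exploit the Fourier inversion formula together with the hypothesis $\tau>\tfrac12$. First I would pick, for a given $v\in H^{\tau}((0,T);X)$, an extension $\bar v\in H^{\tau}(\R;X)$ whose norm is comparable to $\|v\|_{H^{\tau}((0,T);X)}$, and write the inversion formula $\bar v(t)=\int_{\R}\tilde{\bar v}(k)e^{2\pi i kt}\,dk$. Splitting $\|\tilde{\bar v}(k)\|_{X}=(1+|k|)^{-\tau}(1+|k|)^{\tau}\|\tilde{\bar v}(k)\|_{X}$ and applying Cauchy--Schwarz in $k$ gives
\begin{equation*}
\sup_{t}\|\bar v(t)\|_{X}\leq\left(\int_{\R}(1+|k|)^{-2\tau}\,dk\right)^{1/2}\|\bar v\|_{H^{\tau}(\R;X)},
\end{equation*}
where the first factor is finite precisely because $2\tau>1$. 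Restricting to $[0,T]$ and taking the infimum over extensions yields $\sup_{[0,T]}\|v(t)\|_{X}\leq C\|v\|_{H^{\tau}((0,T);X)}$; continuity in $t$ follows since $\bar v$ is the Fourier transform of an $X$-valued $L^{1}$ function, hence lies in $C^{0}(\R;X)$, and one restricts to the closed interval. This establishes the first, continuous injection.

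For the compactness statement I would verify the hypotheses of the vector-valued Arzel\`a--Ascoli theorem in $C^{0}([0,T];Y)$ for a bounded set $\mathcal{B}\subset H^{\tau}((0,T);X)$. Pointwise relative compactness is immediate: by the first part $\{v(t):v\in\mathcal{B}\}$ is bounded in $X$ for each fixed $t$, and since the embedding $X\subset Y$ is compact this set is relatively compact in $Y$. Equicontinuity comes from a H\"older estimate obtained from the same Fourier representation: using $|e^{2\pi i kt}-e^{2\pi i ks}|\leq C|k|^{\alpha}|t-s|^{\alpha}$ (valid for $0\leq\alpha\leq1$ by $\min(a,b)\leq a^{1-\alpha}b^{\alpha}$) and Cauchy--Schwarz,
\begin{equation*}
\|v(t)-v(s)\|_{X}\leq C|t-s|^{\alpha}\left(\int_{\R}(1+|k|)^{2\alpha-2\tau}\,dk\right)^{1/2}\|v\|_{H^{\tau}},
\end{equation*}
and the integral converges for any $\alpha<\tau-\tfrac12$. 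Thus $\mathcal{B}$ is uniformly H\"older, hence equicontinuous, in $X$ and \emph{a fortiori} in $Y$. Combining the two properties, Arzel\`a--Ascoli yields that $\mathcal{B}$ is relatively compact in $C^{0}([0,T];Y)$, which is exactly the asserted compact injection.

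The main obstacle, and really the only delicate point, is the bookkeeping around the quotient-norm definition: one must consistently pass to extensions on $\R$, prove the estimates there, and check that the constants in both the supremum bound and the H\"older bound depend only on the $H^{\tau}$-norm, so that equicontinuity is genuinely \emph{uniform} over $\mathcal{B}$. Everything else is a routine consequence of Cauchy--Schwarz in frequency and the convergence of $\int_{\R}(1+|k|)^{-2\tau}\,dk$ when $\tau>\tfrac12$; note in particular that the compactness of $X\subset Y$ enters only through the pointwise-in-$t$ compactness, while the gain of H\"older regularity responsible for equicontinuity is purely a time-frequency phenomenon.
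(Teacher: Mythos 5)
Your proof is correct and is essentially the standard argument that the paper delegates to Guermond's Appendix A.1 rather than writing out: Fourier inversion plus Cauchy--Schwarz in frequency (using $2\tau>1$) for the continuous injection into $C^{0}([0,T];X)$, and a uniform H\"older-in-time estimate (with exponent $\alpha<\tau-\tfrac{1}{2}$) combined with the compactness of $X\subset Y$ and the vector-valued Arzel\`a--Ascoli theorem for the compact injection into $C^{0}([0,T];Y)$. Your bookkeeping with the quotient-norm definition --- extending to $\R$ with comparable norm and checking that both the supremum bound and the H\"older constant are uniform over a bounded set --- is exactly what is needed to make the statement on $(0,T)$ rigorous.
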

\begin{proof}
See \cite{Guer}, Appendix A.1.
\end{proof}

\begin{lemma}\label{C3}
Let $H_{0}\subset H_{1}$ be two Hilbert spaces with compact embedding. Let $\gamma>0$ and $\gamma>\nu$, then the injection $H^{\gamma}((0,T);H_{0})\subset H^{\nu}((0,T);H_{1})$ is compact.
\end{lemma}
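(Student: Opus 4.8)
The plan is to combine the Fourier characterization of the spaces $H^{\gamma}((0,T);H)$ with the compactness of the embedding $H_{0}\subset H_{1}$ and the spectral gain coming from $\gamma>\nu$, in the spirit of the Aubin--Lions argument behind Lemma \ref{C1}. First I would reduce everything to compactly supported functions on the whole line. Let $\{v_{n}\}$ be a bounded sequence in $H^{\gamma}((0,T);H_{0})$; since the norm on $H^{\gamma}((0,T);H_{0})$ is a quotient norm, each $v_{n}$ admits an extension $V_{n}\in H^{\gamma}(\mathbb{R};H_{0})$ with $\|V_{n}\|_{H^{\gamma}(\mathbb{R};H_{0})}\leq 2\|v_{n}\|_{H^{\gamma}((0,T);H_{0})}\leq C$. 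Fixing a cutoff $\chi\in C^{\infty}_{0}(\mathbb{R})$ with $\chi\equiv 1$ on $[0,T]$ and using that multiplication by $\chi$ is bounded on $H^{\gamma}(\mathbb{R};H_{0})$, the functions $w_{n}:=\chi V_{n}$ are supported in a fixed compact set $K$, are uniformly bounded in $H^{\gamma}(\mathbb{R};H_{0})$, and satisfy $w_{n}|_{(0,T)}=v_{n}$. Since the restriction operator $H^{\nu}(\mathbb{R};H_{1})\to H^{\nu}((0,T);H_{1})$ is continuous, it suffices to extract a subsequence of $\{w_{n}\}$ that converges in $H^{\nu}(\mathbb{R};H_{1})$.

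Next I would split the $H^{\nu}(\mathbb{R};H_{1})$ norm into low and high frequencies,
\[
\|w\|_{H^{\nu}(\mathbb{R};H_{1})}^{2}=\int_{|k|\leq R}(1+|k|)^{2\nu}\|\tilde{w}(k)\|_{H_{1}}^{2}\,dk+\int_{|k|>R}(1+|k|)^{2\nu}\|\tilde{w}(k)\|_{H_{1}}^{2}\,dk.
\]
For the high-frequency part, since $H_{0}\hookrightarrow H_{1}$ continuously and $(1+|k|)^{2\nu}\leq(1+R)^{2(\nu-\gamma)}(1+|k|)^{2\gamma}$ for $|k|>R$ (here $\nu-\gamma<0$), I get
\[
\int_{|k|>R}(1+|k|)^{2\nu}\|\tilde{w}_{n}(k)\|_{H_{1}}^{2}\,dk\leq C(1+R)^{2(\nu-\gamma)}\|w_{n}\|_{H^{\gamma}(\mathbb{R};H_{0})}^{2}\leq C(1+R)^{2(\nu-\gamma)},
\]
which tends to $0$ as $R\to\infty$, uniformly in $n$. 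This is the easy part, and it is exactly where the hypothesis $\gamma>\nu$ enters.

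The \emph{core} of the argument is the low-frequency part, where the compactness of $H_{0}\subset H_{1}$ must be used. Because $w_{n}$ is supported in $K$, its time Fourier transform is differentiable in $k$ with $\partial_{k}\tilde{w}_{n}(k)=\widetilde{(-2\pi i t)\,w_{n}}(k)$, and since $\gamma>0$ gives $H^{\gamma}(\mathbb{R};H_{0})\hookrightarrow L^{2}(\mathbb{R};H_{0})$ while $t$ is bounded on $K$, both $\tilde{w}_{n}$ and $\partial_{k}\tilde{w}_{n}$ are bounded in $L^{2}(\mathbb{R};H_{0})$. Hence $\{\tilde{w}_{n}\}$ is bounded in $H^{1}((-R,R);H_{0})$. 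Viewing $k$ as the time variable and applying Lemma \ref{C1} with the chain $H_{0}\subset H_{1}\subset H_{1}$ (whose first embedding is compact by hypothesis), one obtains that $H^{1}((-R,R);H_{0})$ embeds compactly into $L^{2}((-R,R);H_{1})$, so $\{\tilde{w}_{n}\}$ is relatively compact there. As the weight $(1+|k|)^{2\nu}$ is bounded on $[-R,R]$, a subsequence is Cauchy in the low-frequency part of the $H^{\nu}$ norm for each fixed $R$.

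Finally I would let $R=R_{j}\to\infty$ and extract a diagonal subsequence $\{w_{n_{k}}\}$ that is Cauchy in $L^{2}((-R_{j},R_{j});H_{1})$ for every $j$. Given $\e>0$, the uniform high-frequency estimate lets me choose $R$ so that the tail of $\|w_{n_{k}}-w_{n_{l}}\|_{H^{\nu}(\mathbb{R};H_{1})}^{2}$ over $|k|>R$ is below $\e/2$ for all $k,l$, while for that fixed $R$ the low-frequency part falls below $\e/2$ once $k,l$ are large. Thus $\{w_{n_{k}}\}$ is Cauchy in the complete space $H^{\nu}(\mathbb{R};H_{1})$, hence convergent, and restricting to $(0,T)$ finishes the proof. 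I expect the delicate point to be the reduction of the low-frequency estimate to Lemma \ref{C1}: namely justifying the $H^{1}$-in-$k$ bound for the Fourier transform and selecting the correct triple of spaces $H_{0}\subset H_{1}\subset H_{1}$; the frequency truncation and the diagonalization are then routine.
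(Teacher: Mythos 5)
Your argument is essentially correct, but note that the paper itself does not prove this lemma: its entire ``proof'' is the citation ``See \cite{Guer}, Appendix A.2''. What you have produced is therefore a self-contained reconstruction, along the classical Lions-type Fourier argument: reduce to compactly supported functions on $\mathbb{R}$ via extension and a smooth cutoff, split the $H^{\nu}(\mathbb{R};H_{1})$ norm at frequency $R$, kill the high frequencies uniformly using the gap $\gamma>\nu$ together with the continuous embedding $H_{0}\subset H_{1}$, and obtain compactness at low frequencies from the compact embedding via the $H^{1}$-in-$k$ bound on $\widetilde{w}_{n}$ (which exploits the compact support in $t$), finishing with a diagonal extraction. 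This is exactly the strategy behind such results in the literature, and it buys the paper self-containedness where it currently relies on an external reference. Two small points should be tightened. First, Lemma \ref{C1} as stated requires the embeddings to be \emph{dense}, which is not a hypothesis here; either replace $H_{1}$ by the closure of $H_{0}$ in $H_{1}$ before invoking it (compactness in $L^{2}((-R,R);\overline{H_{0}})$ then gives compactness in $L^{2}((-R,R);H_{1})$), or bypass Lemma \ref{C1} entirely: your bound shows that $\{\widetilde{w}_{n}\}$ is bounded and equicontinuous as a family of $H_{0}$-valued functions of $k$ on $[-R,R]$, so Arzel\`a--Ascoli combined with the compact embedding $H_{0}\subset H_{1}$ already yields a subsequence converging in $C^{0}([-R,R];H_{1})$, which is all you need. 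Second, the boundedness of multiplication by $\chi\in C^{\infty}_{0}(\mathbb{R})$ on $H^{\gamma}(\mathbb{R};H_{0})$ for fractional $\gamma$ is true but not free; it deserves a word of justification (Slobodeckij seminorm for $0<\gamma<1$, Leibniz rule plus interpolation in general). Neither issue affects the correctness of your scheme.
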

\begin{proof}
See \cite{Guer} Appendix A.2
\end{proof}


\subsection{Artificial Compressibility Approximation}

 In this section we recall some previous result about the approximating system that we rewrite for convenience of the reader.
 \begin{equation}
\begin{cases}
\displaystyle{\partial_{t}\ue+\nabla \pe=\mu\Delta \ue-\left(\ue\cdot\nabla\right)\ue -\frac{1}{2}(\dive \ue)\ue}\\
\e\partial_{t}\pe+ \dive \ue=0,
\end{cases}
\label{3.1}
\end{equation} 
As said in the Introduction we consider  two initial condition, namely
 \begin{equation}
 \ue(x,0)=\ue_{0}(x)\qquad\pe(x,0)=\pe_{0}(x),
 \label{3.2}
  \end{equation}
such that
\begin{align}
\ue_{0}\rightarrow u_{0}\textrm{ in }L^{2}(\R^{3})\qquad\textrm{as $\e\rightarrow 0$}
\label{3.3}\\
\sqrt{\varepsilon}\pe_{0}\rightarrow 0\textrm{ in }L^{2}(\R)\qquad\textrm{as $\e\rightarrow 0$}.
\label{3.4}
\end{align}
We have an existence theorem for the system \eqref{3.1} with the above initial data.
\begin{theorem}\label{ex}
Let $(\ue_{0},\pe_{0})$ satisfy the conditions \eqref{3.3} and \eqref{3.4}, $\varepsilon>0$. Then the system \eqref{3.1} has a weak solution $(\ue,\pe)$ with the following properties
\begin{itemize}
\item $\ue\in L^{\infty}((0,T);L^{2}(\R^3))\cap L^{2}((0,T);{H}^{1}(\R^{3}))$
\item $\sqrt{\varepsilon}\pe\in L^{\infty}((0,T);L^{2}(\R^{3}))$
\end{itemize}
for all $T>0$.
\end{theorem}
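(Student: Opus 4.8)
The plan is to build the solution by a Faedo--Galerkin approximation, to establish energy estimates uniform in the discretization parameter, and then to pass to the limit by compactness. Since the domain is the whole space $\R^3$, I would use a frequency truncation as Galerkin projection: let $P_n$ be the projection onto the frequencies $|\xi|\le n$ and seek band-limited approximations $(\ue_n,\pe_n)=(P_n\ue_n,P_n\pe_n)$ solving
\begin{equation*}
\partial_t\ue_n+P_n\nabla\pe_n=\mu\Delta\ue_n-P_n\big[(\ue_n\cdot\nabla)\ue_n\big]-\tfrac12P_n\big[(\dive\ue_n)\ue_n\big],\qquad \e\,\partial_t\pe_n+\dive\ue_n=0,
\end{equation*}
with initial data $P_n\ue_0$ and $P_n\pe_0$. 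For each fixed $n$ the right-hand side is locally Lipschitz on the band-limited subspace, so the Cauchy--Lipschitz theorem gives a unique local-in-time solution.

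The structural heart of the argument is the corrective term $-\tfrac12(\dive\ue)\ue$, which is designed to restore the energy cancellation lost when $\dive\ue\neq0$. Indeed, integrating by parts on $\R^3$ one has $\int_{\R^3}(\ue_n\cdot\nabla)\ue_n\cdot\ue_n\,dx=-\tfrac12\int_{\R^3}(\dive\ue_n)|\ue_n|^2\,dx$, so the two nonlinear contributions cancel when the first equation is tested against $\ue_n$. Testing the second equation against $\pe_n$ and using $\int\nabla\pe_n\cdot\ue_n=-\int\pe_n\dive\ue_n=\e\int\pe_n\partial_t\pe_n$, I would add the two identities to obtain the energy balance
\begin{equation*}
\frac12\frac{d}{dt}\Big(\|\ue_n\|_{L^2}^2+\e\|\pe_n\|_{L^2}^2\Big)+\mu\|\nabla\ue_n\|_{L^2}^2=0.
\end{equation*}
Integrating in time and invoking \eqref{3.3}--\eqref{3.4} yields bounds uniform in $n$ for $\ue_n$ in $L^\infty(L^2)\cap L^2(H^1)$ and for $\sqrt\e\,\pe_n$ in $L^\infty(L^2)$; these bounds also prevent finite-time blow-up, so the approximate solutions are global.

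To pass to the limit $n\to\infty$ at fixed $\e$, the linear terms are handled by weak-$*$ compactness, while the quadratic terms demand strong convergence of $\ue_n$. I would obtain it through an Aubin--Lions argument: reading $\partial_t\ue_n$ off the first equation, $\mu\Delta\ue_n$ is bounded in $L^2(H^{-1})$, the quadratic terms are bounded in some $L^q((0,T);H^{-s})$ with $q>1$ by three-dimensional interpolation of the $L^\infty(L^2)\cap L^2(H^1)$ bound (for instance $\ue_n\in L^{10/3}(L^{10/3})$ controls $\ue_n\otimes\ue_n$ in $L^{5/3}(L^{5/3})$), and $\nabla\pe_n$ is bounded in $L^\infty(H^{-1})$, where the fixedness of $\e$ is used. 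Hence $\partial_t\ue_n$ is bounded in $L^q((0,T);H^{-s})$, so $\ue_n$ is bounded in $W^{1,q}((0,T);H^{-s})\hookrightarrow H^\gamma((0,T);H^{-s})$ for some $\gamma>0$, and Lemma~\ref{C1} (applied on balls, since $H^1\subset\subset L^2$ only locally on $\R^3$) yields strong convergence of $\ue_n$ in $L^2((0,T);L^2_{loc})$. From the second equation $\partial_t\pe_n=-\e^{-1}\dive\ue_n$ is bounded in $L^2(L^2)$, giving the analogous control of $\pe_n$. The strong convergence lets me identify the limits of $(\ue_n\cdot\nabla)\ue_n$ and $(\dive\ue_n)\ue_n$, and lower semicontinuity transfers the regularity class to $(\ue,\pe)$, which is the desired weak solution of \eqref{3.1}.

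I expect the compactness step to be the principal difficulty. Because at this stage the pressure is controlled only in $L^\infty(L^2)$—the uniformity in $\e$ of Section~3 being unavailable here—its gradient lives merely in $H^{-1}$, so one must arrange all the constituents of $\partial_t\ue_n$, namely the diffusion, the two quadratic terms, and $\nabla\pe_n$, inside a single negative space $H^{-s}$ before the Aubin--Lions lemma can be applied; the competing spatial integrabilities of the quadratic terms in three dimensions are what make this bookkeeping delicate.
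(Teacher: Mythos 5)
Your proposal is correct and follows exactly the route the paper itself indicates, namely a Galerkin-type approximation combined with the energy identity produced by Temam's corrective term $-\tfrac12(\dive\ue)\ue$ and an Aubin--Lions compactness argument at fixed $\e$; the paper simply omits these details, referring to the a priori bounds of \cite{DM01} and ``standard finite dimensional Galerkin type approximations.'' The only cosmetic difference is that your frequency-truncation spaces are infinite-dimensional, but Bernstein's inequality makes the truncated nonlinearity locally Lipschitz there, so your local existence step is sound.
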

The proof of this theorem can be omitted since it is a direct consequence of all the a priori bounds in  \cite{DM01} and it follows by using standard finite dimensional Galerkin type approximations.
 In \cite{DM01} the following result was showed.
 \begin{theorem}\label{MDT}
Let $(\ue,\pe)$ be a sequence of weak solutions in $\R^{3}$ of the system \eqref{3.1}, assume that the initial data satisfy \eqref{3.3} and \eqref{3.4}. Then 
\begin{itemize} 
\item{}There exist $u\in L^{\infty}((0,T);L^{2}(\R^3))\cap L^{2}((0,T);{\PH}^{1}(\R^{3}))$ such that
\begin{equation}
\ue\rightarrow u\textrm{ weakly in } L^{2}((0,T);{\PH}^{1}(\R^{3})).
\label{3.5}
\end{equation}
\item{}For any $p\in[4,6)$ the gradient component $Q\ue$ of the vector field $\ue$ satisfies
\begin{equation}
 Q\ue\rightarrow 0\textrm{ strongly in }L^{2}((0,T);L^{p}(\R^{3})).
 \label{3.6}
 \end{equation}
 \item{} The divergence-free component $P\ue$ of the vector field $\ue$ satisfies
 \begin{equation}
 P\ue\rightarrow Pu=u\textrm{ strongly in }L^{2}((0,T);L^{2}_{loc}(\R^{3})).
 \label{3.7}
 \end{equation}
 \item{}The sequence $\{\pe\}$ converges in the sense of the distributions to
 \begin{equation}
 p=\Delta^{-1}\left(tr\left((Du)^{2}\right)\right) 
 \label{3.8}
 \end{equation}
 \end{itemize}
 Moreover $Pu=u$ is a Leray weak solution to the incompressible Navier-Stokes equation
 \begin{equation}
 P\left(\partial_{t}u-\Delta u+(u\cdot\nabla)u\right)=0\textrm{ in }D^{'}([0,T]\times \R^{3})
 \end{equation} 
 and the following inequality holds for every $t\in[0,T]$,
 \begin{equation}
 \int_{\R^{3}}dx |u(x,t)|^{2}+2\int_{0}^{t}ds\int_{\R^{3}}dx |\nabla u(x,s)|^{2} \leq \int_{\R^{3}}dx |u(x,0)|^{2}.
\label{3.9}
\end{equation}
\end{theorem}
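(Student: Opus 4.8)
The plan is to run the standard weak-convergence scheme for a penalization limit, with the decisive extra ingredient that the gradient (acoustic) part of $\ue$ is controlled by dispersion rather than by compactness. First I would establish the uniform a priori bounds by testing the momentum equation in \eqref{3.1} against $\ue$. The two nonlinear contributions combine exactly as
\begin{equation*}
\int_{\R^3}\left[(\ue\cdot\nabla)\ue\cdot\ue+\tfrac12(\dive\ue)|\ue|^2\right]dx=0,
\end{equation*}
so the modified nonlinearity $\tfrac12(\dive\ue)\ue$ is precisely what kills the energy production coming from $\dive\ue\neq0$. Rewriting the pressure work via the continuity equation, $\int\nabla\pe\cdot\ue=-\int\pe\,\dive\ue=\tfrac{\e}{2}\frac{d}{dt}\int|\pe|^2$, I obtain the energy identity
\begin{equation*}
\tfrac12\frac{d}{dt}\Big(\|\ue\|_{L^2}^2+\e\|\pe\|_{L^2}^2\Big)+\mu\|\nabla\ue\|_{L^2}^2=0,
\end{equation*}
which with \eqref{3.3}--\eqref{3.4} yields the uniform bounds $\ue\in L^\infty(L^2)\cap L^2(\PH^1)$ and $\sqrt{\e}\pe\in L^\infty(L^2)$. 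The $L^2(\PH^1)$ bound gives \eqref{3.5} immediately by weak compactness after extracting a subsequence.

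The hard part will be \eqref{3.6}, the strong decay of the gradient component $Q\ue$. The idea is to read \eqref{3.1} as an acoustic system: applying $Q$ to the momentum equation (which fixes $\nabla\pe$, since $Q\nabla\pe=\nabla\pe$) and coupling with the continuity equation rewritten as $\e\,\partial_t\pe+\dive Q\ue=0$ gives
\begin{equation*}
\partial_t Q\ue+\nabla\pe=G^\e,\qquad \e\,\partial_t\pe+\dive Q\ue=0,
\end{equation*}
where $G^\e=\mu\Delta Q\ue-Q\big[(\ue\cdot\nabla)\ue+\tfrac12(\dive\ue)\ue\big]$ is bounded in a fixed negative-regularity space by the energy estimates and Sobolev embedding. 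Eliminating $Q\ue$ produces $\e\,\partial_{tt}\pe-\Delta\pe=-\dive G^\e$, a wave equation whose propagation speed is of order $\e^{-1/2}\to\infty$. After rescaling time, the dispersive estimates for the associated wave group force $Q\ue$ to vanish strongly in $L^2(L^p)$ for the admissible range $p\in[4,6)$ in three dimensions. This is the only step that genuinely uses the structure of the system rather than soft functional analysis, and it is where all the technical work lies.

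For \eqref{3.7} I would instead use compactness on the solenoidal part. Applying the Leray projector $P$ to \eqref{3.1} eliminates the pressure, leaving $\partial_t P\ue=\mu\Delta P\ue-P\big[(\ue\cdot\nabla)\ue+\tfrac12(\dive\ue)\ue\big]$, whose right-hand side is bounded in a negative Sobolev space by the $L^\infty(L^2)\cap L^2(\PH^1)$ bound. Lemma \ref{C1} applied to $P\ue\in L^2(H^1)$ together with this time-derivative control then gives strong convergence of $P\ue$ in $L^2(L^2_{loc})$, which is \eqref{3.7}. Passing to the limit in the weak formulation is then routine: decomposing $\ue=P\ue+Q\ue$, the quadratic term survives only through the $P\ue\otimes P\ue$ contribution (handled by \eqref{3.7}), while every factor of $Q\ue$ or of $\dive\ue=\dive Q\ue$ is annihilated by \eqref{3.6}, producing the projected Navier--Stokes equation for $u$.

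Finally, the pressure relation \eqref{3.8} follows by taking the distributional limit in $\nabla\pe=-\partial_t Q\ue+\mu\Delta Q\ue-Q[(\ue\cdot\nabla)\ue+\tfrac12(\dive\ue)\ue]$: the first two terms vanish by \eqref{3.6} while the nonlinear term converges to $Q[(u\cdot\nabla)u]=\nabla\Delta^{-1}\mathrm{tr}((Du)^2)$, identifying $p$ up to a harmonic constant. The energy inequality \eqref{3.9} is obtained by integrating the energy identity on $[0,t]$, discarding the nonnegative term $\e\|\pe(t)\|_{L^2}^2$, using \eqref{3.4} to kill $\e\|\pe_0\|_{L^2}^2$ and \eqref{3.3} for the data, and invoking weak lower semicontinuity of the $L^2$ and $\PH^1$ norms, where $\ue(t)\rightharpoonup u(t)$ for a.e.\ $t$ follows from \eqref{3.7}.
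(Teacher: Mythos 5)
Your proposal is correct and follows essentially the same route as the paper's proof of this theorem: the paper does not reprove it but cites \cite{DM01}, where the convergence is obtained exactly as you outline, via the skew-symmetrized energy identity and the dispersive (Strichartz) analysis of the rescaled acoustic wave equation satisfied by $\pe$ and $Q\ue$. The remaining steps you give (Aubin--Lions compactness for $P\ue$, distributional identification of $p$ through the $Q$-projected momentum equation, and weak lower semicontinuity for the energy inequality) likewise match the argument of \cite{DM01}.
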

It is worth to mention here that in order to prove Theorem 2.4 and Theorem 2.5 only the conditions  \eqref{3.3} and \eqref{3.4} are needed. We don't need to assume $\ue_{0}\in{\PH}^{1}$, namely \eqref{plus3}.

\section{A Priori Estimates}

In this section we deal with the \emph{a priori} bounds for our approximating system. \\
By taking into account the Corollary 4.2 and Theorem 4.3 of \cite{DM01} we recall the following estimates.
\begin{lemma}\label{mardon}
Let $(\ue\pe)$ be a weak solution of the system \eqref{3.1} with initial data \eqref{3.2}. Assume that the conditions \eqref{3.3} and \eqref{3.4} hold. Then there exists $c>0$, indipendent on $\e$, such that the follwing estimates hold
\begin{align*} 
& \sqrt{\e}\pe &\quad  \text{is bounded in $L^{\infty}([0,T];L^{2}(\R^{3}))$,}\\
& \e\pe_{t} &\quad  \text{is relatively compact in $H^{-1}([0,T]\times \R^{3}),$}\\
& \nabla\ue &\quad  \text{is bounded in $L^{2}([0,T]\times\R^{3}),$}\\
& \ue &\quad  \text{is bounded in $L^{\infty}([0,T];L^{2}(\R^{3}))\cap L^{2}([0,T];L^{6}(\R^{3})),$}\\
(&\ue \!\cdot\!\nabla)\ue &\quad  \text{is bounded in $L^{2}([0,T];L^{1}(\R^{3}))\cap L^{1}([0,T];L^{3/2}(\R^{3})),$}\\
(& div\ue)\ue &\quad  \text{is bounded in $L^{2}([0,T];L^{1}(\R^{3}))\cap L^{1}([0,T];L^{3/2}(\R^{3})).$}\\
&\e^{3/8}\pe &\quad \text{is bounded in $L^{4}([0,T];W^{-2,4}(\R^{3}))$}\\
&\e^{7/8}\partial_{t}\pe &\quad \text{is bounded in $L^{4}([0,T];W^{-3,4}(\R^{3}))$}
\end{align*}
\end{lemma}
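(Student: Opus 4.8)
The plan is to reduce all of the listed bounds to the basic energy identity for the system \eqref{3.1}, and to obtain the two pressure estimates from the acoustic structure of the equations as established in \cite{DM01}. First I would test the momentum equation against $\ue$ and the second equation against $\pe$ and add. The key point is the algebraic cancellation produced by the corrector $-\tfrac12(\dive\ue)\ue$: since
\[
\int_{\R^3}(\ue\cdot\nabla)\ue\cdot\ue\,dx=-\tfrac12\int_{\R^3}(\dive\ue)|\ue|^{2}\,dx,
\]
the two convective contributions cancel exactly, so that the nonlinearity drops out of the energy balance even though $\ue$ is not divergence free. Using $\int\nabla\pe\cdot\ue=-\int\pe\,\dive\ue$ together with $\dive\ue=-\e\partial_{t}\pe$ from the second equation, the pressure coupling becomes $-\tfrac{\e}{2}\tfrac{d}{dt}\|\pe\|_{L^2}^{2}$, and integrating in time gives
\[
\tfrac12\|\ue(t)\|_{L^2}^{2}+\tfrac{\e}{2}\|\pe(t)\|_{L^2}^{2}+\mu\int_{0}^{t}\|\nabla\ue\|_{L^2}^{2}\,ds=\tfrac12\|\ue_{0}\|_{L^2}^{2}+\tfrac{\e}{2}\|\pe_{0}\|_{L^2}^{2},
\]
whose right-hand side is bounded uniformly in $\e$ by \eqref{3.3}--\eqref{3.4}. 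This single identity yields the bound on $\sqrt{\e}\pe$ in $L^{\infty}L^{2}$, the bound on $\nabla\ue$ in $L^{2}([0,T]\times\R^{3})$, and the bound on $\ue$ in $L^{\infty}L^{2}$; the $L^{2}L^{6}$ bound then follows from the Sobolev embedding $\PH^{1}(\R^{3})\hookrightarrow L^{6}(\R^{3})$.

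The bounds on the nonlinear terms are then routine H\"older estimates. For $(\ue\cdot\nabla)\ue$ I would use $\|(\ue\cdot\nabla)\ue\|_{L^{1}}\le\|\ue\|_{L^2}\|\nabla\ue\|_{L^2}$ with $\ue\in L^{\infty}L^{2}$ and $\nabla\ue\in L^{2}L^{2}$ to land in $L^{2}L^{1}$, and $\|(\ue\cdot\nabla)\ue\|_{L^{3/2}}\le\|\ue\|_{L^6}\|\nabla\ue\|_{L^2}$ with $\ue\in L^{2}L^{6}$ to land in $L^{1}L^{3/2}$; the same two inequalities, with $\dive\ue$ replacing one factor $\nabla\ue$, give the bounds for $(\dive\ue)\ue$. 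For the compactness of $\e\partial_{t}\pe$ I would observe that $\sqrt{\e}\to0$ and the $L^{\infty}L^{2}$ bound on $\sqrt{\e}\pe$ force $\e\pe\to0$ strongly in $L^{2}([0,T]\times\R^{3})$, so that $\e\partial_{t}\pe=\partial_{t}(\e\pe)\to0$ strongly in $H^{-1}([0,T]\times\R^{3})$ and is in particular relatively compact there (equivalently, $\e\partial_{t}\pe=-\dive\ue\to0$).

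The delicate estimates are the last two, on $\e^{3/8}\pe$ and $\e^{7/8}\partial_{t}\pe$, and these are where I would rely on \cite{DM01}. Taking the divergence of the momentum equation and inserting $\dive\ue=-\e\partial_{t}\pe$ shows that $\pe$ solves the damped acoustic equation
\[
\e\partial_{tt}\pe-\Delta\pe-\mu\e\Delta\partial_{t}\pe=\dive\Big((\ue\cdot\nabla)\ue+\tfrac12(\dive\ue)\ue\Big),
\]
whose principal part $\e\partial_{tt}-\Delta$ propagates at speed $\e^{-1/2}$. After rescaling time by $\sqrt{\e}$ this becomes a standard wave operator, for which the three-dimensional Strichartz estimate at the admissible endpoint $(q,r)=(4,4)$ is available; feeding in the source bounds from the nonlinear estimates above and tracking the powers of $\e$ produced by the rescaling and by the $L^{\infty}L^{2}$ control of $\sqrt{\e}\pe$ produces the stated $L^{4}W^{-2,4}$ and $L^{4}W^{-3,4}$ bounds. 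I expect this dispersive analysis --- precisely Corollary~4.2 and Theorem~4.3 of \cite{DM01} --- to be the main obstacle, the remaining estimates being immediate consequences of the energy identity; in the present lemma they are only recalled, the genuinely new Fourier-in-time pressure estimates of the paper being established separately.
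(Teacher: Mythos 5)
Your proposal is correct and follows essentially the same route as the paper, which offers no proof of its own for this lemma but simply recalls the estimates from Corollary~4.2 and Theorem~4.3 of \cite{DM01}: the first six bounds are exactly the energy identity (with the cancellation produced by the corrector $-\tfrac12(\dive\ue)\ue$) plus H\"older and Sobolev, and the last two are the Strichartz estimates for the rescaled acoustic equation established in \cite{DM01}. Your reconstruction of the energy balance, the nonlinear estimates, and the compactness of $\e\partial_t\pe$ is accurate, and you correctly identify the dispersive analysis of \cite{DM01} as the only non-elementary ingredient.
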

Unfortunately these estimates are not sufficient to prove the generalized energy inequality \eqref{1.2}. The aim of this section is to estimate more carefully  the nonlinear term and to get estimates on Hilbert space for the pressure 
$\pe$ and the term $\sqrt{\e}\pe$.\\
 Let be $p,q,\bar{r}$ and $s$  real numbers such that the following relations holds:
\begin{equation}
\frac{2}{p}+\frac{3}{q}=4,\quad p\in[1.2],\quad q\in[1,\frac{3}{2}],\quad \frac{s}{3}:=\frac{1}{q}-\frac{1}{2},\quad \bar{r}:=\frac{1}{p}-\frac{1}{2}.
\label{4.1}
\end{equation}
These relations follow from the Sobolev embedding theorems, in particular if $p,q$ satisfy \eqref{4.1} then the following embedding holds
\begin{equation}
L^{p}((0,T);L^{q}({\R}^{3}))\subset H^{-r}((0,T);H^{-s}({\R}^{3}))
\label{4.2}
\end{equation}
The first \emph{a priori} estimate regards the nonlinear terms
\begin{lemma}\label{NL}
Let $\ue$ be a weak solutions of the system \eqref{3.1}, then there exists a constat $c>0$, indipendent on $\e$, such that
\begin{equation}
|| (\ue\cdot\nabla)\ue+\frac{1}{2}\ue\dive\ue ||_{H^{-r}(\PH^{-s})}\leq c.
\label{4.3}
\end{equation}
\end{lemma}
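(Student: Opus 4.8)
The plan is to obtain \eqref{4.3} from the uniform bounds already collected in Lemma~\ref{mardon} by interpolating in the mixed-norm Lebesgue spaces and then invoking the Sobolev embedding \eqref{4.2}, checking throughout that every constant is independent of $\e$.

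First I would note that the two summands are separately controlled by Lemma~\ref{mardon}: both $(\ue\cdot\nabla)\ue$ and $(\dive\ue)\ue$ are bounded, uniformly in $\e$, in $L^{2}((0,T);L^{1}(\R^{3}))\cap L^{1}((0,T);L^{3/2}(\R^{3}))$. By linearity the combination $(\ue\cdot\nabla)\ue+\tfrac12\ue\dive\ue$ enjoys the same uniform bound in this intersection, so it lies in both endpoint spaces simultaneously, which is what legitimizes the interpolation below.

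Next I would interpolate between the two endpoints $(p_{0},q_{0})=(2,1)$ and $(p_{1},q_{1})=(1,3/2)$. Complex (Riesz--Thorin) interpolation for vector-valued $L^{p}(L^{q})$ spaces yields, for every $\theta\in[0,1]$, a uniform bound in $L^{p}((0,T);L^{q}(\R^{3}))$ with $\frac1p=\frac{1-\theta}{p_{0}}+\frac{\theta}{p_{1}}$ and $\frac1q=\frac{1-\theta}{q_{0}}+\frac{\theta}{q_{1}}$. A direct computation shows that both endpoints satisfy $\frac{2}{p}+\frac{3}{q}=4$ (indeed $\tfrac22+\tfrac31=4$ and $\tfrac21+\tfrac{3}{3/2}=4$), hence so does every interpolated pair; thus the entire admissible segment of exponents in \eqref{4.1} is covered, with an $\e$-independent constant. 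Finally I would apply \eqref{4.2}: since \eqref{4.1} holds with $s=3\bigl(\tfrac1q-\tfrac12\bigr)$ and $r=\bar r=\tfrac1p-\tfrac12$, we get the embedding $L^{p}((0,T);L^{q}(\R^{3}))\subset H^{-r}((0,T);\PH^{-s}(\R^{3}))$, and the uniform $L^{p}(L^{q})$ bound transfers directly to the desired $H^{-r}(\PH^{-s})$ bound in \eqref{4.3}.

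The main difficulty here is bookkeeping rather than any deep estimate: one must make sure the interpolation of the mixed-norm spaces is valid for Bochner-valued functions and that the embedding constant in \eqref{4.2} does not depend on $\e$. Both points are harmless, since the interpolation constants are absolute and Lemma~\ref{mardon} already furnishes $\e$-uniform endpoint bounds; consequently the constant $c$ in \eqref{4.3} is independent of $\e$, as claimed. (Consistently with the remark in the Introduction, this lemma is routine; the genuinely delicate estimates are those of the pressure terms obtained later.)
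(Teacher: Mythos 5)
Your proposal is correct and is precisely the argument the paper has in mind: the paper explicitly omits the proof, stating that it ``follows by standard interpolation argument and the embedding \eqref{4.2}'', and your write-up simply supplies the details, namely the $\e$-uniform endpoint bounds of Lemma~\ref{mardon} in $L^{2}(L^{1})\cap L^{1}(L^{3/2})$, the observation that both endpoints (and hence the whole interpolation segment) satisfy \eqref{4.1}, and the transfer to $H^{-r}(\PH^{-s})$ via \eqref{4.2}. No discrepancy with the paper's route.
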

The proof of \eqref{4.3} can be omitted since it follows by standard interpolation argument and the embedding \eqref{4.2}.
\subsection{Estimates of the velocity vector field}
In this section we get the estimate on $\ue$ and $\partial_{t}\ue$ on negative Sobolev spaces. So we need to assume
\begin{equation*}
\ue_{0}\in{\PH}^{1}(\R^{3}). 
\end{equation*}
Since we are going to use Fourier transform with respect to time we need to extend all the function from $[0,T]$ to $\R$.
We define by $\overline{\ue}$ the following extension of $\ue$
\begin{equation*}
\overline{\ue} = \left\{
                   \begin{array}{ccc}
                   (t+1)\ue_{0} & \textrm{ on } [-1,0] \\
                    \ue & \textrm{  } (0,
                   T+1) \\ 0 & \textrm{ on } [T+1,\infty].
                   \end{array}
                   \right.
\end{equation*}
Let  $\varphi \in C^{\infty}(\mathbb{R})$ be such that
$supp(\varphi)\subset(-1,T+1)$ and $\varphi\equiv 1$ on $[0,T]$, we denote with abuse of notation
\[
\ue=\varphi \overline\ue.
\]
Next we define the following function
\begin{equation*} f^{\e} = \left\{
                \begin{array}{cc}
                (1+t)\varphi^{'} \ue_{0}+\varphi \ue_{0}-(1+t)\varphi ((\ue\cdot\nabla)\ue+\frac{1}{2}\ue\dive\ue) & \textrm{  }t\in (-1,0) \\
                -\varphi((\ue\cdot\nabla)\ue+\frac{1}{2}\ue\dive\ue)+\varphi^{'}\ue & \textrm{  }t\not\in (-1,0)
                \end{array}
                \right.
\end{equation*}
It follows that $\ue$ e $f^{\e}$ are well definied on $(-\infty,+\infty)$. By using \eqref{plus3} it follows that there exist $c>0$, indipendent on $\e$, such that
\begin{equation}\label{4.4}
||f^{\e}||_{H^{-r}({\PH}^{-s})}
\leq c
\end{equation}

\begin{lemma}\label{Vel}
Let $(\ue,\pe)$ be a solution of the system \eqref{3.1} then, there exists $c>0$, independent on $\e$, such that 
\begin{itemize}
\item for all $s\in\left[\frac{1}{2},\frac{3}{2}\right)$ and $r>\bar{r}$, one has
\begin{align}
||\partial_{t}\ue||_{H^{-r}({\PH}^{-s})}&\leq c,
\label{4.5}\\
||\Delta\ue||_{H^{-r}({\PH}^{-s})}&\leq c,
\label{4.6}
\end{align}
\item  for all $\alpha\in\left[\frac{1}{4},\frac{1}{2}\right)$ and $\tau<\bar{\tau}=\frac{2}{5}(1+\alpha)$, one has
\begin{equation}
||\ue||_{H^{-\tau}({\PH}^{-\alpha})}\leq c.
\label{4.7}
\end{equation}
\end{itemize}
\end{lemma}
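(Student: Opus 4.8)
The plan is to push everything through the Fourier transform in time and to read the estimates off explicit Fourier symbols. By construction the extended velocity satisfies, on all of $\R^{3}\times\R$,
\[
\partial_{t}\ue+\nabla\pe=\mu\Delta\ue+f^{\e},
\]
and by \eqref{4.4} we have $\|f^{\e}\|_{H^{-r}(\PH^{-s})}\le c$ uniformly in $\e$. Denoting by a tilde the Fourier transform in time and by a hat the Fourier transform in space, this turns into the algebraic identity $2\pi ik\,\widetilde{\ue}+\nabla\widetilde{\pe}=\mu\Delta\widetilde{\ue}+\widetilde{f^{\e}}$. I would then split $\ue=P\ue+Q\ue$: the projector $P$ annihilates the pressure, while $Q\ue$ is tied to $\pe$ through the continuity equation, and the two pieces behave very differently.

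First I would handle the solenoidal part $P\ue$, which is the benign half. Applying $P$ removes $\nabla\widetilde{\pe}$ and leaves $(2\pi ik-\mu\Delta)\widetilde{P\ue}=P\widetilde{f^{\e}}$, that is
\[
\widehat{\widetilde{P\ue}}(\xi,k)=\frac{\widehat{P\widetilde{f^{\e}}}(\xi,k)}{2\pi ik+4\pi^{2}\mu|\xi|^{2}}.
\]
The elementary symbol bounds $\frac{|k|}{|2\pi ik+4\pi^{2}\mu|\xi|^{2}|}\le C$ and $\frac{|\xi|^{2}}{|2\pi ik+4\pi^{2}\mu|\xi|^{2}|}\le C$ say precisely that multiplication by $2\pi ik$ (i.e.\ $\partial_{t}$) and by $4\pi^{2}|\xi|^{2}$ (i.e.\ $-\Delta$) act boundedly, so that $\|\partial_{t}P\ue\|_{H^{-r}(\PH^{-s})}+\|\Delta P\ue\|_{H^{-r}(\PH^{-s})}\le C\|f^{\e}\|_{H^{-r}(\PH^{-s})}\le c$, with $C$ independent of $\e$.

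The gradient part together with the pressure is where the real work lies. Since $\dive P\ue=0$, the continuity equation gives $\dive Q\ue=-\e\partial_{t}\pe$; applying $Q$ to the momentum equation couples $Q\ue$ and $\pe$, and eliminating $Q\ue$ yields the acoustic (telegraph) equation $\e\partial_{tt}\pe-\Delta\pe-\mu\e\,\partial_{t}\Delta\pe=-\dive f^{\e}$. After the two Fourier transforms this becomes
\[
\widehat{\widetilde{\pe}}(\xi,k)=\frac{-2\pi i\,\xi\cdot\widehat{\widetilde{f^{\e}}}(\xi,k)}{4\pi^{2}\big[(1+2\pi i\mu\e k)|\xi|^{2}-\e k^{2}\big]},
\]
whence $\widehat{\widetilde{Q\ue}}$, $\widehat{\widetilde{\partial_{t}Q\ue}}$ and $\widehat{\widetilde{\Delta Q\ue}}$ are explicit. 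The main obstacle — and the reason this lemma is the core of the paper — is that the denominator degenerates on the acoustic resonance $|\xi|^{2}=\e k^{2}$, where the symbols blow up like $\e^{-1}$, so no naive $\e$-uniform multiplier bound is available. The remedy I would use is that these singularities are integrable once tested against the weights $(1+|k|)^{-2r}|\xi|^{-2s}$: I would split frequency space into the regions $|\xi|^{2}\gg\e k^{2}$, $|\xi|^{2}\ll\e k^{2}$ and the thin resonant shell $|\xi|^{2}\sim\e k^{2}$, and estimate the weighted symbol on each region uniformly in $\e$, exploiting that the homogeneous weight $|\xi|^{-s}$ with $s\in[\tfrac12,\tfrac32)$ is strong at low frequencies and that the shell shrinks to the origin as $\e\to0$. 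Here the strict inequality $r>\bar r$ furnishes exactly the margin of regularity needed to close the estimate. Adding the $P$ and $Q$ contributions yields \eqref{4.5} and \eqref{4.6}.

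Finally, \eqref{4.7} I would obtain by interpolating the time-derivative bound \eqref{4.5} against the energy estimate $\ue\in L^{\infty}(L^{2})\cap L^{2}(\PH^{1})$ in the Hilbert scale $H^{\gamma}(\PH^{\beta})$. Matching the spatial index to $-\alpha$ forces the interpolation parameter $\theta=\frac{1+\alpha}{1+s}$, and tracking the time index while choosing the admissible exponents optimally — the best choice being $r\searrow\bar r$ together with $s=\tfrac32-2\bar r$ — produces the threshold $\bar\tau=\tfrac25(1+\alpha)$, the endpoint being excluded, which gives \eqref{4.7} for every $\tau<\bar\tau$. Throughout, I expect the genuinely delicate point to be the $\e$-uniform control of the acoustic resonance in the $Q$/pressure part, which is exactly where the artificial compressibility enters and where the negative-order Hilbert norms are indispensable.
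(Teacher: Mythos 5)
Your route is genuinely different from the paper's, and it contains a gap at exactly the point you yourself identify as the crux. The paper never solves for the symbols at all: it takes the Fourier transform in time, pairs the momentum equation with the complex conjugate of $-\Delta^{-\sigma}\tilde{u}^{\e}$ \emph{and simultaneously} the continuity equation with the complex conjugate of $-\Delta^{-\sigma}\tilde{p}^{\e}$, sums, and takes the imaginary (resp.\ real) part; the pressure and $\e\partial_t\pe$ contributions then combine into a signed term through the energy structure of the approximation, and one reads off $|k|\,\|\tilde{u}^{\e}\|^2_{\PH^{-\sigma}}\lesssim|(\tilde{f}^{\e},\Delta^{-\sigma}\tilde{u}^{\e})|$ directly, for the \emph{full} velocity, with no $P$/$Q$ splitting and no inversion of the acoustic operator. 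Estimate \eqref{4.7} is then obtained by the same pairing with $\sigma=\alpha$ followed by the spatial interpolation $\|\tilde{u}^{\e}\|_{\PH^{s}}\le\|\tilde{u}^{\e}\|_{\PH^{\alpha}}^{\gamma}\|\tilde{u}^{\e}\|_{\PH^{1+2\alpha}}^{1-\gamma}$ inside the Fourier-side inequality, H\"older in $k$, and the $L^2(\PH^1)$ energy bound; this is close in spirit to your interpolation step, though you assert rather than derive the threshold $\bar\tau=\frac{2}{5}(1+\alpha)$.

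The gap is in your treatment of the $Q\ue$/pressure part. You reduce to the symbol with denominator $(1+2\pi i\mu\e k)|\xi|^{2}-\e k^{2}$ and propose to control it by a region decomposition, claiming the resonant singularity is "integrable against the weights". But the needed estimate is an $L^{2}\to L^{2}$ multiplier bound with the \emph{same} weight $(1+|k|)^{-r}|\xi|^{-s}$ on both sides, so the weights give no gain and integrability is not the relevant notion: the data $\widehat{\widetilde{f^{\e}}}$ could concentrate on the resonant shell. Concretely, for $\partial_{t}Q\ue$ the multiplier is comparable to $\e k^{2}\big|(1+2\pi i\mu\e k)|\xi|^{2}-\e k^{2}\big|^{-1}$; away from the shell $|\xi|^{2}\sim\e k^{2}$ it is indeed $O(1)$, but on the shell the denominator is only $\approx 2\pi\mu\e|k|\,|\xi|^{2}$, giving a multiplier of size $(\mu\e|k|)^{-1}\sim(\mu\sqrt{\e}\,|\xi|)^{-1}$, which is \emph{not} bounded uniformly in $\e$. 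So the strategy as described does not close without importing additional information on $f^{\e}$ near the resonant set, and this is precisely the difficulty the paper's energy pairing is designed to avoid. Your treatment of $P\ue$ via the heat symbol is correct but is the easy half.
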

\begin{proof}
We start by proving \eqref{4.7}. By taking the Fourier transform with respect to time of the system \eqref{3.1} we obtain
\begin{equation}
\begin{cases} 
2i\pi k \tilde{\ue}-\Delta\tilde{\ue}+\nabla\tilde{\pe}=\tilde{f^{\varepsilon}} \label{4.8}\\
\varepsilon 2i\pi k \tilde{\pe}+\dive \tilde{\ue}=0 
\end{cases}
\end{equation}
Let $\alpha>0$, we multiply the first equation of \eqref{4.8} by the complex conjugate of $-\Delta^{-\alpha} \tilde{\ue}$ and the second by the complex conjugate
of $-\Delta^{-\alpha} \tilde{\pe}$. By summing up and by taking the imaginary part we obtain the following inequality
\begin{equation}
|k||(\tilde{\ue},\Delta^{-\alpha}\tilde{\ue})|\leq |(\tilde{f^{\varepsilon}},\Delta^{-\alpha}\tilde{\ue})|^{2}.
\label{4.9}
\end{equation}
Now we choose $\alpha$ such that $\alpha\leq s\leq 1 +2\alpha$. Since $s\in\left[\frac{1}{2},\frac{3}{2}\right)$ we have that 
$\alpha\in\left[\frac{1}{4},\frac{1}{2}\right)$. By interpolation we get
\begin{equation}
||\tilde{\ue}||_{{\PH}^s}\leq ||\tilde{\ue}||_{{\PH}^\alpha}^{\gamma}||\tilde{\ue}||_{{\PH}^{1+2\alpha}}^{1-\gamma},
\label{4.10}
\end{equation}
where $\gamma=\frac{2\alpha+1-s}{1+\alpha}$. Inserting \eqref{4.10} in \eqref{4.9} we have
\begin{equation}
|k|||\tilde{\ue}||_{H^{-\alpha}}^{2-\gamma}\leq c||\tilde{f_{\varepsilon}}||_{H^{-s}}||\tilde{\ue}||_{H^{1}}^{1-\gamma}.
\label{4.11}
\end{equation}
We set $\nu=\frac{2\gamma}{2-\gamma}$, then we have 
\begin{equation}
|k|^{\frac{2}{2-\gamma}-\nu}||\tilde{\ue}||_{H^{-\alpha}}^{2}\leq c(1+|k|)^{-\nu}||\tilde{f^{\varepsilon}}||_{H^{-s}}^{\frac{2}{2-\gamma}}||\tilde{\ue}||_{H^{1}}^{\frac{2(1-\gamma)}{2-\gamma}}.
\label{4.12}
\end{equation}
By integrating \eqref{4.12} with respect to the time and by using H\"{o}lder inequality we get
\begin{equation}
\int_{\R}dk|k|^{\frac{2}{2-\gamma}}||\tilde{\ue}||_{H^{-\alpha}}^{2}\leq c ||f^{\varepsilon}||_{H^{-r}(H^{-s})}^{\frac{2}{2-\gamma}}||\tilde{\ue}||_{L^{2}(H^{1})}^{\frac{2(1-\gamma)}{2-\gamma}}.
\label{4.13}
\end{equation}
If we set $\bar{\tau}:=\frac{1+\alpha}{1+s}(1-\bar{r})$ then \eqref{4.7} follows.\\
Now we are going to prove \eqref{4.5}. We multiply the equations of the system \eqref{4.8} respectively by the complex conjugate of
$-\Delta^{-s}\tilde{\ue}$ and $-\Delta^{-s}\tilde{\pe}$. By summing up and by taking the imaginary part we obtain
\begin{equation*}
|k|||\tilde{\ue}||_{H^{-s}}^{2}\leq c ||\tilde{f}^{\varepsilon}||_{H^{-s}}||\Delta^{-s}\tilde{\ue}||_{{\PH}^{s}}.
\end{equation*}
So we have
\begin{align}
\nonumber |k|||\tilde{\ue}||_{H^{-s}}&\leq c  ||\tilde{f}^{\varepsilon}||_{H^{-s}},\\
 ||\tilde{\partial_{t}\ue}||_{H^{-s}}&\leq c  ||\tilde{f}^{\varepsilon}||_{H^{-s}}. \label{4.14}
\end{align}
Thus \eqref{4.5} follows by integrating with respect to time \eqref{4.14}.
It remains only to prove \eqref{4.6}. Testing the first and the second equation of \eqref{4.8} with  $-\Delta^{1-s}\tilde{\ue}$ and with  $\Delta^{1-s}{\pe}$ respectively, we have
\begin{equation}
||\Delta\ue||_{H^{-s}}^{2}\leq c ||f^{\varepsilon}||_{H^{-s}}||\Delta\ue||_{H^{-s}} .
\label{plus}
\end{equation}
Thus by integrating in time \eqref{plus} we obtain \eqref{4.6}.
\end{proof}
\subsection{Estimates of the pressure}

\begin{lemma}\label{Pre}
Let $(\ue,\pe)$ be a weak solution of \eqref{3.1}, there exist a constant $c>0$, independent on $\e$, such that for $s\in\left[\frac{1}{2},\frac{3}{2}\right)$, $r>\bar{r}=\frac{3}{4}-\frac{s}{2}$, $\delta\in(0,\frac{1}{12})$ and $\beta\in\left(0,\frac{1-12\delta}{10}\right)$   we have
\begin{equation}
||\pe||_{H^{-r}({\PH}^{1-s})}\leq c 
\label{4.15}
\end{equation}
and
\begin{equation}
||\sqrt{\varepsilon}\pe||_{{\PH}^{\frac{1}{2}+\frac{\beta}{4}}(H^{-\frac{1}{2}+\delta})}\leq c
\label{4.16}
\end{equation}
\end{lemma}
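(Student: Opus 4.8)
The plan is to derive both bounds from the time--Fourier--transformed system \eqref{4.8}, treating \eqref{4.15} as an essentially algebraic consequence of the first equation and reserving the real work for \eqref{4.16}, where the two equations must be coupled. Throughout I use the homogeneous identity $\|\tilde\pe\|_{\PH^{1-s}}=\|\nabla\tilde\pe\|_{\PH^{-s}}$ and the fact that one time derivative is multiplication by $2\pi i k$ on the Fourier side, so that $\|2\pi i k\,\tilde\ue\|=\|\widetilde{\partial_{t}\ue}\|$.

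For \eqref{4.15} I would simply read off the pressure gradient from the first line of \eqref{4.8}, namely $\nabla\tilde\pe=\tilde{f^\e}-2\pi i k\,\tilde\ue+\Delta\tilde\ue$. Measuring in $\PH^{-s}$ and using the two facts above gives, pointwise in $k$,
\[
\|\tilde\pe\|_{\PH^{1-s}}\le \|\tilde{f^\e}\|_{\PH^{-s}}+\|\widetilde{\partial_{t}\ue}\|_{\PH^{-s}}+\|\Delta\tilde\ue\|_{\PH^{-s}} .
\]
Squaring, multiplying by $(1+|k|)^{-2r}$ and integrating in $k$, the three terms are controlled respectively by \eqref{4.4}, \eqref{4.5} and \eqref{4.6}, which proves \eqref{4.15}.

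For \eqref{4.16} the difficulty is that the only equation producing time regularity of the pressure is the second one, $\e\partial_{t}\pe=-\dive\ue$, i.e. $\partial_{t}(\sqrt\e\pe)=-\e^{-1/2}\dive\ue$; the singular factor $\e^{-1/2}$ prevents us from transferring the $L^{2}(L^{2})$ bound on $\dive\ue$ into a uniform full time derivative. My plan is therefore to couple the two equations: taking the divergence of the first line of \eqref{4.8} and inserting $\dive\tilde\ue=-2\pi i k\e\,\tilde\pe$ from the second yields the scalar elliptic equation
\[
(1+2\pi i k\e)\,\Delta\tilde\pe+(2\pi k)^{2}\e\,\tilde\pe=\dive\tilde{f^\e}.
\]
Testing this identity against $\overline{(I-\Delta)^{-(1/2-\delta)}\tilde\pe}$, so that the inner space is exactly $H^{-1/2+\delta}$, and separating the real and imaginary parts would produce, for each $k$, a bound for $|k|^{2}\|\sqrt\e\tilde\pe\|_{H^{-1/2+\delta}}^{2}$ in terms of a spatial seminorm of $\tilde\pe$ and of a pairing against $\dive\tilde{f^\e}$. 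I would then interpolate this $|k|^{2}$--weighted information against the unweighted bound coming from $\sqrt\e\pe\in L^{\infty}(L^{2})$, which after Plancherel in time gives $\int_{\R}\|\sqrt\e\tilde\pe\|_{H^{-1/2+\delta}}^{2}\,dk\le c$, so as to recover, at least away from resonance, the intermediate weight $|k|^{1+\beta/2}$ demanded by the $\PH^{1/2+\beta/4}$ norm.

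The heart of the matter, and the step I expect to cost the most, is the \emph{resonance} $|\xi|^{2}\sim \e k^{2}$ at which the symbol $1+2\pi i k\e$ multiplying $\Delta\tilde\pe$ ceases to dominate and $\sqrt\e\pe$ is genuinely of order one with no extra time smoothness; there the clean $|k|^{2}$ bound fails and the forcing $\tilde{f^\e}$ is useless, so one must fall back on the $L^{\infty}(L^{2})$ bound. Hence the argument really requires splitting the $(k,\xi)$ integral into a resonant/low--frequency region handled by $\sqrt\e\pe\in L^{\infty}(L^{2})$ and a non--resonant/high--frequency region handled by \eqref{4.4}, with the splitting threshold chosen as a power of $\e$. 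Carrying this out, one has to track simultaneously the convergence of the $k$-- and $\xi$--integrals and the \emph{net} power of $\e$, requiring the latter to stay nonnegative as $\e\to 0$; it is precisely this double bookkeeping that forces the narrow ranges $\delta\in(0,\tfrac{1}{12})$ and $\beta\in(0,\tfrac{1-12\delta}{10})$. Once the threshold is optimized the two contributions are bounded uniformly in $\e$, which yields \eqref{4.16}.
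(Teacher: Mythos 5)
Your treatment of \eqref{4.15} is fine and is exactly the paper's argument: read $\nabla\pe=f^{\e}+\Delta\ue-\partial_{t}\ue$ off the momentum equation and invoke \eqref{4.4}, \eqref{4.5}, \eqref{4.6}. The problem is \eqref{4.16}, where you discard the direct route and propose an elliptic equation for the pressure together with a resonance analysis in $(k,\xi)$ that you never carry out, and whose central step does not close as stated. Testing your identity $(1+2\pi ik\e)\Delta\tilde\pe+(2\pi k)^{2}\e\,\tilde\pe=\dive\tilde{f}^{\e}$ against $\overline{(I-\Delta)^{-(1/2-\delta)}\tilde\pe}$ and taking real parts gives
\begin{equation*}
(2\pi k)^{2}\e\,\|\tilde\pe\|^{2}_{H^{-\frac12+\delta}}
\;=\;\bigl\|(-\Delta)^{\frac12}(I-\Delta)^{-\frac14+\frac{\delta}{2}}\tilde\pe\bigr\|^{2}_{L^{2}}
\;+\;\mathrm{Re}\,\bigl(\dive\tilde{f}^{\e},(I-\Delta)^{-(\frac12-\delta)}\tilde\pe\bigr),
\end{equation*}
so the Laplacian contributes with the \emph{opposite} sign to the term you want to isolate: extracting $|k|^{2}\|\sqrt{\e}\tilde\pe\|^{2}_{H^{-1/2+\delta}}$ requires an upper bound on a norm comparable to $\|\tilde\pe\|_{H^{1/2+\delta}}$ that is integrable in $k$ with \emph{no negative} $k$-weight. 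The only pressure information available, namely \eqref{4.15}, lives in negative time-Sobolev spaces $H^{-r}$, so this term is uncontrolled; the imaginary part yields only $|k|\e$ times that same strong spatial norm, which is the wrong norm and the wrong weight. The subsequent interpolation against $\sqrt{\e}\pe\in L^{\infty}(L^{2})$ and the resonance splitting are therefore built on an estimate you do not possess, and the thresholds $\delta<\tfrac{1}{12}$, $\beta<\tfrac{1-12\delta}{10}$ are asserted to emerge from an $\e$-power bookkeeping that is never performed.

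The actual proof is much more elementary and uses only the second equation. Multiply $2\pi ik\e\,\tilde\pe=-\dive\tilde\ue$ by $\Delta^{-\frac12+\delta}\overline{\tilde\pe}$, integrate by parts to move the derivative onto the pressure, and apply Cauchy--Schwarz in $k$ after splitting the weight as $(1+|k|)^{\frac12+\beta}$ on $\tilde\ue$ and $(1+|k|)^{-\frac12-\frac{\beta}{2}}$ on $\tilde\pe$. This bounds the left side of \eqref{4.16} by the product $\|\ue\|_{\PH^{\frac12+\beta}(H^{-\frac12+3\delta})}\,\|\pe\|_{H^{-\frac12-\frac{\beta}{2}}(\PH^{\frac12-\delta})}$, and the two factors are finite by \eqref{4.7} and \eqref{4.15} respectively. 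The admissible ranges come from there and nowhere else: \eqref{4.7} needs $\alpha=\tfrac12-3\delta\ge\tfrac14$, i.e.\ $\delta<\tfrac{1}{12}$, and $\tfrac12+\beta<\tfrac25(1+\alpha)$, i.e.\ $\beta<\tfrac{1-12\delta}{10}$. No elliptic equation for $\pe$, no resonance analysis, and no use of the $L^{\infty}(L^{2})$ bound on $\sqrt{\e}\pe$ is required.
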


\begin{proof}

The estimate \eqref{4.15} can be proved by observing that from the first equation of the system \eqref{3.1} we have
\begin{equation}
\nabla \pe=f^{\varepsilon}+\Delta\ue-\partial_{t}\ue
\label{4.17}
\end{equation}
and by using \eqref{4.5}, \eqref{4.6} and \eqref{4.4}.
Now we are going to prove  \eqref{4.16}. We have the following equation
\begin{equation}
2\pi ik\varepsilon\tilde{\pe}+\dive \ue=0
\label{4.19}
\end{equation}
By multiplying \eqref{4.19} by $\Delta^{-\frac{1}{2}+\delta}\tilde{\pe}$ we obtain
\begin{equation}
 |k|||\sqrt{\varepsilon}\tilde{\pe}||^{2}_{H^{-\frac{1}{2}+\delta}}\leq c |(\Delta^{-\frac{1}{2}+\delta}\ue,\nabla\pe)|.
 \label{4.20}
\end{equation}
From \eqref{4.20} we have 
\begin{equation}
|k|^{1+\frac{\beta}{2}} ||\sqrt{\varepsilon}\pe||^{2}_{H^{-\frac{1}{2}+\delta}}\leq (1+|k|)^{\frac{1}{2}+\beta} ||\tilde{\ue}||_{H^{-\frac{1}{2}+3\delta}}\frac{||\tilde{\pe}||_{{\PH}^{\frac{1}{2}-\frac{\delta}{2}}}}{(1+|k|)^{\frac{1}{2}+\frac{\beta}{2}}},
\label{4.21}
\end{equation}
where $\beta\in\left(0,\frac{1-12\delta}{10}\right)$. By integrating with respect to  time \eqref{4.21} and by using the H\"{o}lder inequality we have
\begin{equation}
||\sqrt{\varepsilon}\pe||_{{\PH}^{\frac{1}{2}+\frac{\beta}{4}}(H^{-\frac{1}{2}+\delta})}\leq ||\ue ||_{{\PH}^{\frac{1}{2}+\beta}(H^{-\frac{1}{2}+3\delta})}    ||\pe||_{H^{-\frac{1}{2}-\frac{\beta}{2}}({\PH}^{\frac{1}{2}-\delta})}.
\label{4.22}
\end{equation}
The right hand-side of \eqref{4.22} is bounded by using \eqref{4.7} and \eqref{4.15}, provided $\delta\in(0,\frac{1}{12})$ and $\beta\in\left(0,\frac{1-12\delta}{10}\right)$.
\end{proof}

\section{Convergence to a suitable weak solution}

In this section we give the proof of the theorem 1.1.\\ 
Let us multiply the first equation of \eqref{3.1} by $\ue\phi$ with $\phi\in C_{0}^{\infty}({\R}^{3}\times\R)$, $\phi >0$, then we have, 
\begin{equation}\label{5.1}
\int_{0}^{T}dt(\partial_{t}\ue,\ue\phi)-(\Delta \ue,\ue\phi)+(\ue\cdot\nabla\ue,\ue\phi)+\frac{1}{2}(\ue \dive \ue, \ue\phi)+(\nabla \pe,\ue\phi)=0.
\end{equation}
By integrating by parts we obtain
\begin{equation}
\int_{0}^{T}dt (|\nabla\ue|^{2},\phi)=\int_{0}^{T}dt\frac{|\ue|^{2}}{2}(\phi_{t}+\Delta\phi)+(\ue\frac{|\ue|^{2}}{2},\nabla\phi)+(\ue\pe,\nabla\phi)
+(\pe\dive\ue,\phi).
\label{5.2}
\end{equation}
We estimate each term of \eqref{5.2} separately.\\
By weak lower semicontinuity
and the fact that $\ue\rightharpoonup u$ weakly in $L^{2}({\PH}^{1})$ we have that 
\begin{equation}
\int_{0}^{T}dt (|\nabla u|^{2},\phi)\leq \liminf_{\varepsilon\rightarrow 0}\int_{0}^{T} dt (|\nabla \ue|^{2},\phi)
\label{5.3}
\end{equation}
Since $\ue\rightarrow u$ strongly in $L^{2}(L^{2}_{loc})$, we get 
\begin{equation}
\int_{0}^{T}\frac{|\ue|^{2}}{2}(\phi_{t}+\Delta\phi)\rightarrow\int_{0}^{T}dt \frac{|u|^{2}}{2}(\phi_{t}+\Delta\phi)\qquad\textrm{as $\e\rightarrow 0$}.
\label{5.4}
\end{equation}
Next, by interpolation we have that $\ue\rightarrow u$ strongly in $L^{2}(L^{3})$ and that $\ue$ is bounded in $L^{4}(L^{3})$, so it follows
\begin{equation}
\int_{0}^{T}dt (\ue\frac{|\ue|^{2}}{2},\nabla\phi)\rightarrow\int_{0}^{T}dt(u\frac{|u|^{2}}{2},\phi)\qquad\textrm{as $\e\rightarrow 0$}.
\label{5.5}
\end{equation}
In order to estimate the last two terms in \eqref{5.2} we have to use carefully the estimates of the Lemma 3.3 and Lemma 3.4. We start by estimating
\begin{equation*}
\int_{0}^{T}dt (\ue\pe,\nabla \phi).
\end{equation*}
Let $\eta>0$, we set $r=\frac{2}{5}+\eta$, $s=\frac{3}{10}$. This choice implies that $||\pe||_{H^{-r}(H^{s})}$ is uniformly bounded. So 
$\pe\rightharpoonup p$ weakly in $H^{-r}((0,T),H^{s}_{loc}({\R}^{3}))$. Now let ${\eta}^{'} \in [0,\frac{1}{20}]$ and set $\alpha=\frac{3}{10}-\eta^{'}\leq\frac{1}{4}$, $\tau=\frac{2}{5}(1+\alpha-{\varepsilon}^{'})$.
This choice implies that $||\ue||_{H^{\tau}(H^{-\alpha})}$ is uniformly bounded and as a consequence $\ue\rightharpoonup u$ weakly in $H^{\tau}((0,T);H^{-\alpha})$. By using Lemma 2.3 we obtain that $\ue\rightarrow u$ strongly in $H^{r}((0,T);H^{-s}_{loc}({\R}^{3}))$ provided $\tau>r$ and $s>\alpha$. Since ${\eta}^{'}\in(0,\frac{1}{20})$ we have $s>\alpha$, and if we choose $\eta=\frac{1}{50}(3-20{\eta}^{'})$ we have $\tau>r$. Now it follows easily that 
\begin{equation}
\int_{0}^{T}dt(\pe\ue,\nabla\phi)\rightarrow\int_{0}^{T}dt(pu,\nabla\phi)\qquad\textrm{as $\e\rightarrow 0$}.
\label{5.6}
\end{equation}
By using the second equation of  \eqref{3.1}we have that
\begin{align*}
\left|\int_{0}^{T}dt(\pe\dive\ue, \phi)\right| &\leq \sqrt{\e}\left|\int_{0}^{T}dt(\sqrt{\e}\pe,\pe\phi_{t})\right|\\
&\leq  \sqrt{\varepsilon}|| \sqrt{\varepsilon}\pe||_{{\PH}^{\frac{1}{2}+\frac{\beta}{2}}(H^{-\frac{1}{2}+\delta})}||\pe||_{H^{-\frac{1}{2}-\beta}({\PH}^{\frac{1}{2}-\delta})}.
\end{align*}
So by choosing $\delta$ and $\beta$ as in the Lemma 3.4 we have that
\begin{equation*}
\int_{0}^{T}dt(\pe\dive\ue, \phi)\rightarrow 0\qquad\textrm{as $\e\rightarrow 0$}.
\end{equation*}


 \end{document}